\newcommand\xleftrightarrow[2][]{%
  \ext@arrow 9999{\longleftrightarrowfill@}{#1}{#2}}
\newcommand\longleftrightarrowfill@{%
  \arrowfill@\leftarrow\relbar\rightarrow}
\title{Discrete Gradient Line Fields on Surfaces}
\author[1]{Thomas Lewiner}
\author[2]{Tiago Novello}
\author[3]{Jo\~{a}o Paix\~{a}o}
\author[4]{Carlos Tomei}
\affil[1]{Gamma, The Boston Consulting Group, Paris, France\\
  \texttt{thomas@lewiner.org}}
\affil[2]{Department of Mathematics, PUC-Rio, Rio de Janeiro, Brazil\\
  \texttt{tiago.novello@mat.puc-rio.br}}
\affil[3]{Department of Mathematics, UFRJ, Rio de Janeiro, Brazil\\
  \texttt{jpaixao@dcc.ufrj.br}}
\affil[4]{Department of Mathematics, PUC-Rio, Rio de Janeiro, Brazil\\
  \texttt{tomei@mat.puc-rio.br}}
\authorrunning{T. Lewiner, T. Novello, J. Paixão, and C. Tomei} 
\keywords{Line fields - Discrete Morse theory - discrete gradient line fields - Peixoto orgraph - Morse-Smale foliation}
\begin{document}

\maketitle

\begin{abstract}
{
A line field on a manifold is a smooth map which assigns a tangent line to all but a finite number of points of the manifold. As such, it can be seen as a generalization of vector fields. They model a number of geometric and physical properties, e.g. the principal curvature directions dynamics on surfaces or the stress flux in elasticity. 

We propose a discretization of a Morse-Smale line field on surfaces, extending Forman's construction for discrete vector fields. More general critical elements and their indices are defined from local matchings, for which Euler theorem and the characterization of homotopy type in terms of critical cells still hold.
} 
\end{abstract}

\section{Introduction}
{A line field on a manifold is a smooth map which assigns a tangent line to all but a finite number of points. Such fields model a number of physical properties, like velocity and temperature gradient in fluid flow, stress and momentum flux in elasticity.
Recently, line fields have earned eminence in the nematic fields ambiance \cite{lubensky98,stark99}. In computer graphics, line field are common tools for quadrangulation \cite{dong2006}, visualization of vector/line/symmetric tensor fields \cite{tricoche2002,delmarcelle94}, and remeshing \cite{alliez2003}. Curious line fields abound: some are illustrated in  Figure \ref{figure:linefield_nature}. Fingerprints are studied by the  pattern recognition community \cite{jinwei2004}. 

\begin{figure}[!h]
\centering
\includegraphics[width=13.5cm]{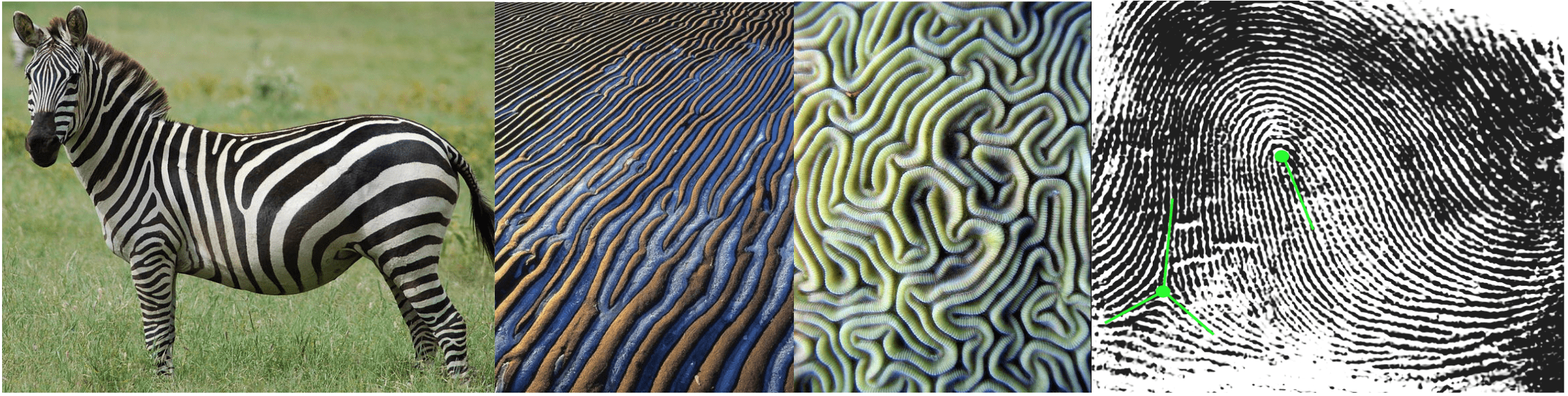}
	\caption{Lines fields: a zebra, sand, coral, fingerprints.}
	\label{figure:linefield_nature}
\end{figure}

There are natural numerical issues, which are common to vector and line fields: how does one compute critical points or special orbits joining them? The literature is extensive (e.g.
\cite{tricoche2001, tricoche2001_1,weinkauf2005, klein2007, chen2007, weinkauf2008}). We take instead a  point of view which is frequent within the dynamical systems community: given a field, we consider its structurally stable properties. For this, one prescribes a metric in the space of fields in a fixed surface $S$ (induced by the $C^r$ topology, for $r \ge 1$), and  an equivalence relation between fields --- topological equivalence --- given by the existence of a homeomorphism in $S$ taking orbits of one field to another (a fine, detailed description of the many objects involved is \cite{nikolaev97}). A field $X_0$ is structurally stable if it is equivalent to sufficiently close fields $X$, i.e., there is $\epsilon$ such that if $| X - X_0 | < \epsilon$ then $X$ and $X_0$ are equivalent.

Andronov and Pontryagin \cite{andronov1937} identified the now called Morse-Smale vector fields as being structurally stable, and Peixoto \cite{peixoto1962structural,peixoto1963structural,peixoto73} proved that they are the only such fields on compact, oriented surfaces. Bronshteyn and Nikolaev \cite{bronshteyn1998structurally,nikolaev98} defined Morse-Smale line fields and showed their structural stability. Such  fields  contain only finitely many  critical points and cycles. Line fields admit some stable critical points with no counterpart in the continuous case,  shown in Figure \ref{figure:critical_point}. Morse-Smale  vector and line fields are dense with respect to the metric: fields arising from a physical or computational situation are well approximated by a structurally stable  field.
\begin{figure}[!h]
\begin{center}
	\includegraphics[width=11cm]{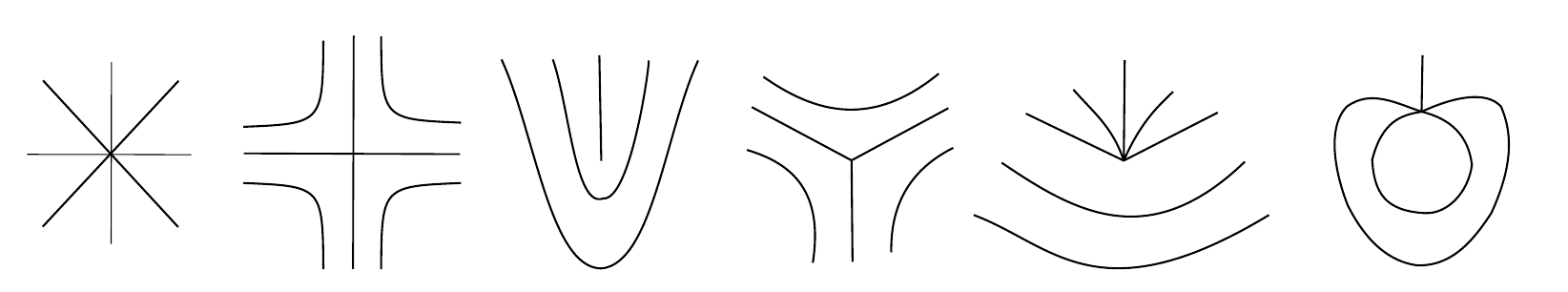}
	\caption{The structurally stable critical points: only the first two are present in vector fields.}
	\label{figure:critical_point}
\end{center}
\end{figure}

Peixoto also defined a fundamental combinatorial object, named the Peixoto orgraph in \cite{nikolaev97}, which in a sense contains all the stable aspects:  classes of Morse-Smale vector fields with respect to topological equivalence are in bijection with the possible Peixoto orgraphs. The analogous result for line fields was presented in \cite{nikolaev97}.

\begin{figure}[h]
\begin{center}
	\includegraphics[width=10cm]{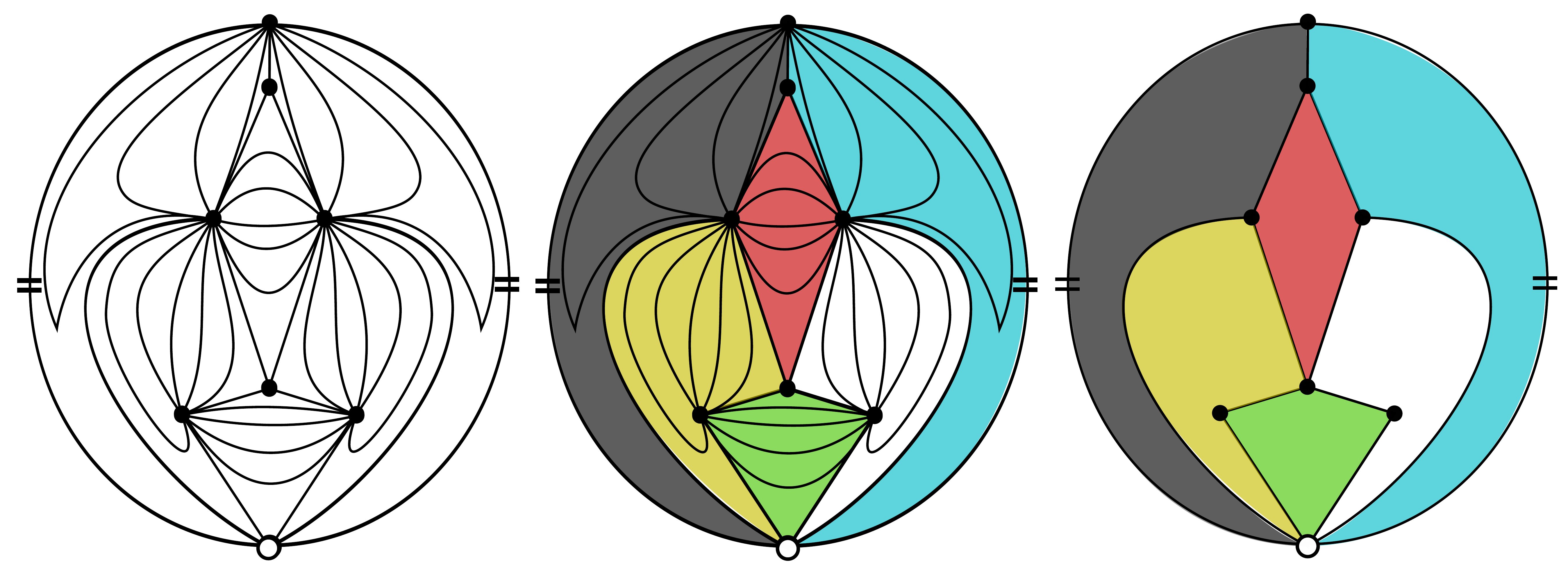}
	\caption{Extracting a Peixoto orgraph of the sphere from a  line field.}
	\label{figure:peixto_invariant}
\end{center}
\end{figure} 
		
Forman \cite{forman98, forman98_1} introduced discrete vector fields as a combinatorial counterpart to Morse-Smale vector fields. For oriented surfaces, such objects  give rise to (all possible) Peixoto orgraphs: in a sense,  discrete and continuous vector fields are  equivalent for stability purposes. Forman's theory has been used in computational geometry (\cite{lewiner2003, lewiner2005, paixao14}), yielding a robust approach to (discrete) Morse functions and the computation of Peixoto orgraphs \cite{jan11,jan11_1}. 

In this text we propose a discretization of a subclass of Morse-Smale line field on compact surfaces, extending Forman's construction. We only allow critical points given by the first four cases in Figure  \ref{figure:critical_point} and those  presented in Figure \ref{figure:critical_points_foliation}, of possible physical interest, despite of the fact that they are not structurally stable. 

\begin{figure}[h]
\begin{center}
	\includegraphics[width=6cm]{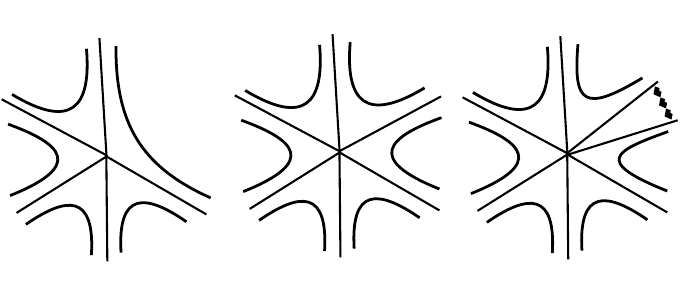}
	\caption{Non structurally stable critical points amenable to discretization}
	\label{figure:critical_points_foliation}
\end{center}
\end{figure} 

As for the continuous cases and Forman's theory, our construction  obtains an Euler-Poincaré theorem and a Peixoto orgraph which encapsulates the underlying topology. 
}

\section{Basic vocabulary}
{
We follow  \cite{gross1987topological}. An \textit{embedding} $i:G \to S$ of a graph $G$ into a connected, compact surface $S$ is a $1$-$1$ continuous map. Two embeddings $i_1$ and $i_2$ of $G$ in a surface $S$ are {\it equivalent} if there exists a homeomorphism $h: S\to S $ such that $h \circ i_1 = i_2$. A {\it 2-cell embedding} $i: G \to S$ is an embedding for which the components of the set $S\setminus i(G)$ are homeomorphic to open discs, the 2-cells. 
A 2-cell embedding $i:G \to S$ induces a {\it (cell) decomposition} $K=(V,E,F)$ of $S$, where $i(G)=(V,E)$ in the obvious way and $F$ are the 2-cells of $S\setminus i(G)$. Given a 2-cell $\Sigma$, the edges and vertices in the boundary of $\Sigma$ can be arranged as a closed walk $\sigma_1,\sigma_2,\ldots,\sigma_k$ along the edges of $G$, the \textit{boundary walk} of $\Sigma$. 

The {\it Hasse diagram} $H(K)$ of $K$ is a graph embedding whose vertices  consist of a unique point in each cell of $K$, and edges are  disjoint lines  connecting  points of adjacent cells whose dimension differ by one (see Figure \ref{figure:MorseMatching}). We abuse language slightly and think of the vertices of $H(K)$ as the cells of $K$. The edges of $H(K)$ split into those containing  vertices or faces of $K$, inducing  the subgraphs $H_V$ and $H_F$.
\begin{figure}[!h]
\begin{center}
	\includegraphics[width=13cm]{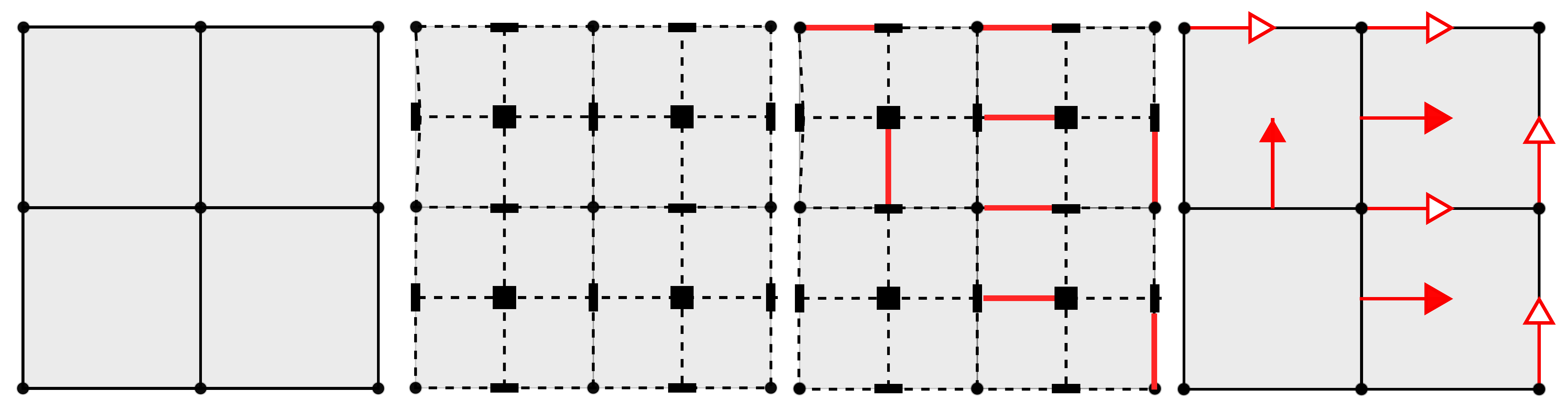}
	\caption{$K$, its Hasse diagram $H(K)$ and two representations of a Morse matching.}
	\label{figure:MorseMatching}
\end{center}
\end{figure}

A {\it matching in $H(K)$} is a collection $M$ of disjoint edges in the Hasse diagram $H(K)$. A {\it discrete vector field} is a pair $(K,M)$. Clearly $M$ is the disjoint union of the sets of edges $M_V$ and $M_F$ containing elements respectively in $V$ and  in $F$. Following \cite{forman98_1,chari2000}, a {\it Morse matching} is a matching in $H(K)$ for which neither $M_V$ or $M_F$ contains a set of alternate edges of a closed cycle of $H(K)$. A {\it discrete gradient vector field} is a pair $(K, M)$ for a Morse matching $M$. Such fields correspond to Forman's  discrete, acyclic Morse-Smale vector fields.

An unmatched cell in a Morse matching is a \textit{critical cell} and its \textit{index} is its dimension.  The discrete version on the Poincaré-Euler formula is due to Forman \cite{forman98}. 

\begin{theorem}[Euler-Poincaré]
\label{theorem:forman_euler_formula} Let $(K,M)$ be a discrete gradient vector field of a compact surface $S$, and $m_p$ be the number of unmatched $p$-cells. Then
 $$\chi(S)=\displaystyle \sum_{i=0,1,2}(-1)^i m_i\ .$$
\end{theorem}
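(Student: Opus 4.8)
The plan is a direct counting argument; the Morse (acyclicity) hypothesis in fact plays no role here, and the identity holds for an arbitrary discrete vector field. The one non-combinatorial ingredient I would invoke is the standard fact that the Euler characteristic of $S$ can be computed from \emph{any} cell decomposition: writing $c_p$ for the number of $p$-cells of $K$, one has $\chi(S) = c_0 - c_1 + c_2$ (for instance, because cellular homology of $K$ computes the homology of $S$). I take this as known.

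Next I would track the effect of the matching $M$ on the cell counts. Since every edge of the Hasse diagram $H(K)$ joins cells whose dimensions differ by one, each edge of $M$ lies either in $M_V$, pairing a $0$-cell with a $1$-cell, or in $M_F$, pairing a $1$-cell with a $2$-cell, and $M = M_V \sqcup M_F$ as already observed. Put $a = |M_V|$ and $b = |M_F|$. Because $M$ is a matching, all endpoints involved are distinct cells: exactly $a$ vertices are matched; exactly $b$ faces are matched; and the matched $1$-cells are the $a$ edges paired with vertices together with the $b$ edges paired with faces, so exactly $a+b$ edges are matched. Hence $m_0 = c_0 - a$, $m_1 = c_1 - a - b$, and $m_2 = c_2 - b$.

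Finally I would substitute these into the alternating sum:
\[
\sum_{i=0}^{2} (-1)^i m_i = (c_0 - a) - (c_1 - a - b) + (c_2 - b) = c_0 - c_1 + c_2 = \chi(S),
\]
the contributions of $a$ and $b$ cancelling exactly. There is no genuine obstacle: the argument is pure bookkeeping once the bijection between matched cells is set up, and the only step that rests on something external is the invariance statement $\chi(S) = c_0 - c_1 + c_2$. I would note in passing that this local-to-global counting is robust enough to carry over, with the same structure, to the more general matchings introduced later in the paper, provided one first records how many cells of each dimension each generalized matched configuration consumes.
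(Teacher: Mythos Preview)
Your argument is correct: each matched pair contributes $0$ to the alternating sum, so $\sum (-1)^i m_i = \sum (-1)^i c_i = \chi(S)$, and indeed acyclicity is not needed for this identity.

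There is, however, nothing in the paper to compare against: Theorem~\ref{theorem:forman_euler_formula} is stated as Forman's result and is not proved here. The paper only proves the line-field analogue, Theorem~\ref{theorem:euler_formula}, and there it takes a different route---it first invokes the Homotopy Theorem~\ref{theorem:homotopy} to reduce to the case $M_V=\emptyset$, and then reads off the identity from the ordinary Euler formula for $\overline{K}$. Your direct cancellation argument is more elementary (it avoids the homotopy reduction entirely) and, as you observe, already contains the mechanism needed for the generalized matchings: one only has to check that each matched configuration consumes cells whose alternating count vanishes.
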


Forman also discretized the basic homotopy theorem of Morse theory \cite{milnor16}.

\begin{theorem}[Homotopy]
\label{theorem:forman_homotopy}
Any discrete gradient vector field $(K,M)$ of $S$ is homotopy equivalent to a decomposition $\overline{K}$  whose $p$-cells are the critical $p$-cells of $(K,M)$. 
\end{theorem}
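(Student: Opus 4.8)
The plan is to follow Forman's original strategy \cite{forman98,forman98_1}: turn the combinatorial matching $M$ into an explicit sequence of elementary collapses and cell attachments, processed in an order that the acyclicity of $M$ makes legitimate.

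\textbf{Step 1 (from a Morse matching to a filtration).} First I would invoke the equivalence (see \cite{forman98_1,chari2000}) between Morse matchings and discrete Morse functions: the hypothesis that neither $M_V$ nor $M_F$ contains the alternate edges of a closed cycle of $H(K)$ is exactly the statement that the digraph obtained from $H(K)$ by reversing the edges of $M$ is acyclic. A topological sort of that digraph produces a real-valued function $f$ on the cells of $K$ that is weakly increasing along incidence, strictly increasing except across matched pairs, and which can be normalised so that each matched pair $(\sigma,\tau)$ with $\dim\tau=\dim\sigma+1$, and each critical cell, occupies its own slot between consecutive values. This gives a filtration of $K$ by sublevel subcomplexes $K_c=\bigcup\{\sigma:f(\sigma)\le c\}$.

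\textbf{Step 2 (analysing the filtration).} Next I would track how $K_c$ changes as $c$ increases, in the two familiar cases. (a) If the only new cells entering in a half-open interval $(c,c']$ are a matched pair $(\sigma,\tau)$, then $\sigma$ is a free face of $\tau$ inside $K_{c'}$: the only cell of $K_{c'}$ properly containing $\sigma$ is $\tau$, because every other cofacet of $\sigma$ has strictly larger $f$-value and so has not yet entered, and any higher-dimensional coface of $\sigma$ would force such a cofacet to be present. Hence $K_{c'}$ collapses onto $K_c$ by an elementary collapse, and the inclusion is a homotopy equivalence. (b) If the only new cell in $(c,c']$ is a critical $p$-cell $\sigma$, then its whole boundary already lies in $K_c$, so $K_{c'}=K_c\cup\sigma$ is obtained from $K_c$ by attaching a $p$-cell along its attaching map; up to homotopy this is coning off that attaching map.

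\textbf{Step 3 (induction).} Starting from $K_{-\infty}=\emptyset$ and increasing $c$ past every value of $f$, I would chain these steps: by (a) the passages across matched pairs leave the homotopy type unchanged, and by (b) each critical $p$-cell contributes exactly one $p$-cell to a CW complex $\overline K$ assembled by successive cell attachments. Since $K=K_{+\infty}$, this yields $K\simeq\overline K$, with the $p$-cells of $\overline K$ in bijection with the critical $p$-cells of $(K,M)$, as claimed. Independence of $\overline K$ (up to homotopy) from the chosen $f$ follows because two admissible choices differ by transpositions of $f$-incomparable cells, which do not alter the intermediate complexes.

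\textbf{Main obstacle.} I expect the delicate point to be (a): checking that every matched pair really is collapsible at the moment it is processed, i.e. that $\sigma$ is genuinely a free face of $\tau$ in the current subcomplex. This is precisely where the acyclicity of $M_V$ and $M_F$ enters, and it needs care because a cell decomposition arising from a $2$-cell embedding of a graph need not be a regular CW complex: an edge may occur twice in the boundary walk of a face, edges may be loops, and so on. One must therefore read ``free face'' and ``elementary collapse'' off the incidence data recorded by the boundary walks rather than naively, and verify that the acyclicity condition still rules out the obstruction to ordering the collapses in this non-regular setting.
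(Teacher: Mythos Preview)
The paper does not actually prove Theorem~\ref{theorem:forman_homotopy}; it is stated there as a background result due to Forman, with a citation to \cite{forman98,forman98_1}, and is then invoked as a black box in the proofs of Theorems~\ref{theorem:homotopy} and~\ref{theorem:euler_formula}. So there is no ``paper's own proof'' to compare against.

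That said, your sketch is precisely Forman's original argument: reinterpret the acyclic matching as a discrete Morse function via Chari's equivalence, obtain a filtration by sublevel subcomplexes, and show that passing each level either performs an elementary collapse (matched pair) or attaches a cell (critical cell). This is correct and standard. Your flagged obstacle is the right one to worry about: since the decompositions here come from $2$-cell embeddings of graphs rather than regular CW complexes, an edge may appear twice in a boundary walk or be a loop, so ``free face'' must be read via incidence multiplicity; the acyclicity hypothesis still guarantees the collapses can be ordered, but one should say this explicitly. With that caveat, the argument goes through and matches what Forman does in the references the paper cites.
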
	

The basic ingredient in the construction of $\overline{K}$ is Forman's definition 
(\cite{forman98_1}) of  a \textit{(discrete) gradient path} of dimension $p$, which is a sequence  of $p$-cells in $K$,
$\gamma=\sigma_1\sigma_2\sigma_3\ldots\sigma_k,$
such that for each $0\leq i <k$ there is a $(p+1)$-cell $\Sigma$ which contains $\sigma_i$ and $\sigma_{i+1}$ satisfying $\{\sigma_i,\Sigma\}\in M$ and $\{\sigma_i,\Sigma\}\in H(K)$. 
The vertices of the Hasse diagram $H(\overline{K})$ are the critical cells. Given a gradient path  $\sigma_1\sigma_2\sigma_3\ldots\sigma_k$ of dimension $p$, for which $\sigma_1$ belongs to a critical $(p+1)$-cell and $\sigma_k$ is also critical, we introduce an edge of $H(\overline{K})$ joining both critical cells.

Following \cite{gross1987topological}, a {\it rotational system} $\mathcal R$ on a finite, connected graph $G= (V,E)$ consists of a choice of a cyclic ordering on each set of edges with a common vertex. 
A vertex which belongs to a single edge $e$ is associated to the ordering $e \mapsto e$: we suppose there are two copies of $e$. We denote a graph $G$ endowed with a rotational system $\mathcal R$ by $G^\mathcal R$.

We say that $G^\mathcal R$ and ${\tilde G}^{\tilde {\mathcal R}}$ {\it equivalent} if $G$ and $\tilde G$ are isomorphic and the isomorphism either preserves or reverses the cyclic orderings for all vertices. 

Similarly, one may suppose that a decomposition $K = (V,E,F)$ of a connected oriented surface $S$ admits  cyclic orderings of the edges sharing a vertex --- the orientability induces one preferred rotational system in the graph $G=(V,E)$, which we denote by $G^{\mathcal R}$. The next theorem states that we reconstruct $S$ from $G^{\mathcal R}$. 

\begin{theorem} \label{theorem:rotationalsystems}
Every rotational system $\mathcal R$  on a finite graph $G=(V,E) $ induces a  2-cell embedding in some oriented surface $S$ with a decomposition $K=(V,E,F)$.   All such embeddings are equivalent.
\end{theorem}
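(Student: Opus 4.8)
The plan is to prove existence by an explicit combinatorial construction of the 2-cell embedding (the "faces from rotation" or "face-tracing" recipe), and then to prove uniqueness up to equivalence by showing that an orientation-preserving-or-reversing graph isomorphism carrying $\mathcal{R}$ to $\tilde{\mathcal{R}}$ extends to a homeomorphism of the underlying surfaces.

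First I would set up the standard bookkeeping. From $G=(V,E)$ form the set of \emph{darts} (directed edges), two per edge; an edge with one endpoint contributes two darts as stipulated by the convention in the excerpt. The rotational system $\mathcal{R}$ gives, at each vertex $v$, a cyclic permutation $\rho_v$ of the darts emanating from $v$; let $\rho$ be the product of these, a permutation of the dart set, and let $\iota$ be the fixed-point-free involution swapping the two darts of each edge. Then I would define the \emph{face permutation} $\phi = \rho \circ \iota$ (or $\iota\circ\rho$, with a fixed convention), whose cycles are declared to be the boundary walks of the 2-cells $F$. To build $S$, take a disjoint collection of closed oriented polygons, one $k$-gon per $\phi$-cycle of length $k$, with oriented boundary edges labelled by the darts in that cycle; glue boundary edges in pairs according to $\iota$, respecting the dart labels. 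Standard arguments (links of vertices are cycles because the $\rho_v$ are single cycles; links of edge-interior and face-interior points are trivially discs) show the quotient is a closed surface $S$; the orientations of the polygons are compatible under the gluing, so $S$ is oriented, and by construction $G$ embeds in $S$ with each complementary region an open disc, i.e. a 2-cell embedding inducing $K=(V,E,F)$. This handles existence; it also shows the induced rotational system at each vertex is exactly $\rho_v$ up to the global orientation choice, which is the content we need for uniqueness.

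For uniqueness, suppose $\mathcal{R}$ yields $i:G\to S$ with decomposition $K$, and a second $\tilde{\mathcal{R}}$ on an isomorphic $\tilde G$ yields $\tilde i:\tilde G\to\tilde S$ with $\tilde K$, where the isomorphism $h_0:G\to\tilde G$ either preserves all the cyclic orderings or reverses all of them; composing with a reflection of $\tilde S$ if necessary (this reverses $\tilde{\mathcal{R}}$ globally), we may assume $h_0$ preserves $\mathcal{R}$. The key observation is that the face permutation $\phi$ is determined by $(\rho,\iota)$, hence $h_0$ carries $\phi$-cycles to $\tilde\phi$-cycles, so it induces a bijection between the 2-cells of $K$ and of $\tilde K$ matching boundary walks. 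Now build the homeomorphism in three stages: it is $i\circ h_0\circ \tilde i^{-1}$ on the $1$-skeleton; extend it over each 2-cell by choosing any homeomorphism of the bounding polygons that agrees with the prescribed boundary identification (possible and essentially unique up to isotopy because a homeomorphism of $S^1$ extends to $D^2$); the orientation-preservation of $h_0$ on rotations guarantees these piecewise definitions glue to a global homeomorphism $h:S\to\tilde S$ with $h\circ i = \tilde i\circ h_0$, which is exactly the equivalence of embeddings. (In fact this shows any 2-cell embedding of $G$ realizing $\mathcal{R}$ is equivalent to the canonical one constructed above, which is the cleanest way to organize the argument: prove every such embedding is equivalent to the canonical model, so all are equivalent to each other.)

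I expect the main obstacle to be the uniqueness half, specifically the verification that the embedding is \emph{recovered} from its induced rotational system — that is, that the combinatorial data $\mathcal{R}$ (plus knowing $S$ is oriented) rigidly determines the cell structure up to homeomorphism. The subtle point is not the abstract gluing but checking that \emph{every} 2-cell embedding inducing $\mathcal{R}$ has its faces given by exactly the $\phi$-cycles: one must argue that tracing the boundary walk of a face of an actual embedded graph, using the local orientation, reproduces the rule $\phi=\rho\circ\iota$. This is a local-to-global argument about orientations of the surface restricting compatibly to the links of all vertices, and it is where one genuinely uses compactness/orientability rather than pure combinatorics; everything else (the polygon gluing, the extension over 2-cells via the Alexander trick) is routine.
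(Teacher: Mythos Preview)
Your proposal is correct and is precisely the classical Heffter--Edmonds face-tracing argument: build darts, set $\phi=\rho\circ\iota$, glue polygons along $\phi$-cycles, and recover uniqueness by showing any 2-cell embedding realizing $\mathcal{R}$ has faces given by the $\phi$-cycles so that the Alexander trick extends the 1-skeleton isomorphism over the 2-cells. Nothing is missing; the point you flag as ``subtle'' (that boundary walks of an actual embedding reproduce $\phi$) is handled exactly as you indicate, via the local orientation at each vertex.

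As for comparison with the paper: there is nothing to compare, because the paper does \emph{not} prove this theorem. It is stated without proof as a background fact imported from Gross and Tucker \cite{gross1987topological}, and the ``Proofs'' section of the paper treats only Theorems~\ref{theorem:euler_formula}, \ref{theorem:homotopy}, and \ref{theorem:quads_theorem}. Your write-up is essentially the argument one finds in that reference, so you have supplied what the paper deliberately outsourced.
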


 In a nutshell, discrete and continuous gradient vector fields are related by the following operations. From a discrete gradient vector field $(K,M)$ of an oriented surface $S$, we obtain the simpler field $
(\overline{K},\emptyset)$ by Theorem \ref{theorem:homotopy}, and its Hasse diagram is endowed of the natural rotational system induced by $S$, giving rise to $H(\overline{K})^{\mathcal{R}}$. One can construct $H(\overline{K})$ directly from $(K,M)$ by considering the gradient paths $\gamma$ between critical points defined above --- they play the role of separatrices in the discrete context. On the other hand, following \cite{peixoto73}, the equivalence class under topological equivalence $[(S,X)]$ of a gradient vector field $(S,X)$ corresponds to the so called Peixoto orgraph $\chi^{\mathcal{R}}$  in \cite{nikolaev97}. 
\[(K,M)\ \ \xrightarrow{\mbox{Homotopy  }}\ \ {(\overline{K},\emptyset)}\ \  \xleftrightarrow{\mbox{Hasse}}\ \ H(\overline{K})^{\mathcal{R}}\ \ \xleftrightarrow{\iota}\ \ \chi^{\mathcal{R}} \ \ \xleftrightarrow{\mbox{Peixoto \cite{peixoto73}}}\ \ [(S,X)]\]
It is not hard to identify  $H(\overline{K})^{\mathcal{R}}$ and $\chi^{\mathcal{R}}$. If these graphs have only two vertices, $S$ is a sphere and the identification is trivial: we handle the more general situation. Both graphs are tripartite: the Hasse diagram by cell dimension and the Peixoto orgraph, by minima, saddle points and maxima of $X$. Edges in the $H(\overline{K})$ --- the 1-cells of $\overline K$ --- connect to two 0-cells and two 2-cells. The same happens for the Peixoto orgraph: saddles connect to two minima and two maxima. Again, both graphs induce 2-cell embeddings in $S$ with 4 edges in the boundary walk of each face. Since $S$ is oriented, both rotational systems are compatible.

\section{Discrete gradient line fields}
\label{section:discrete_line_fields}
{ 
Let $K = (V, E, F)$ be a decomposition of a compact surface $S$ and $M_V$ be a Morse matching on the subgraph $H_V$ of $H(K)$. The pair $(K,M_V)$ is {\it discrete (restricted) gradient line field}. The relationship between this concept and the appropriate subclass of  continuous line fields is not obvious at this point.  Here we prove the expected basic results.
\begin{figure}[h]
\begin{center}
\includegraphics[width=14cm]{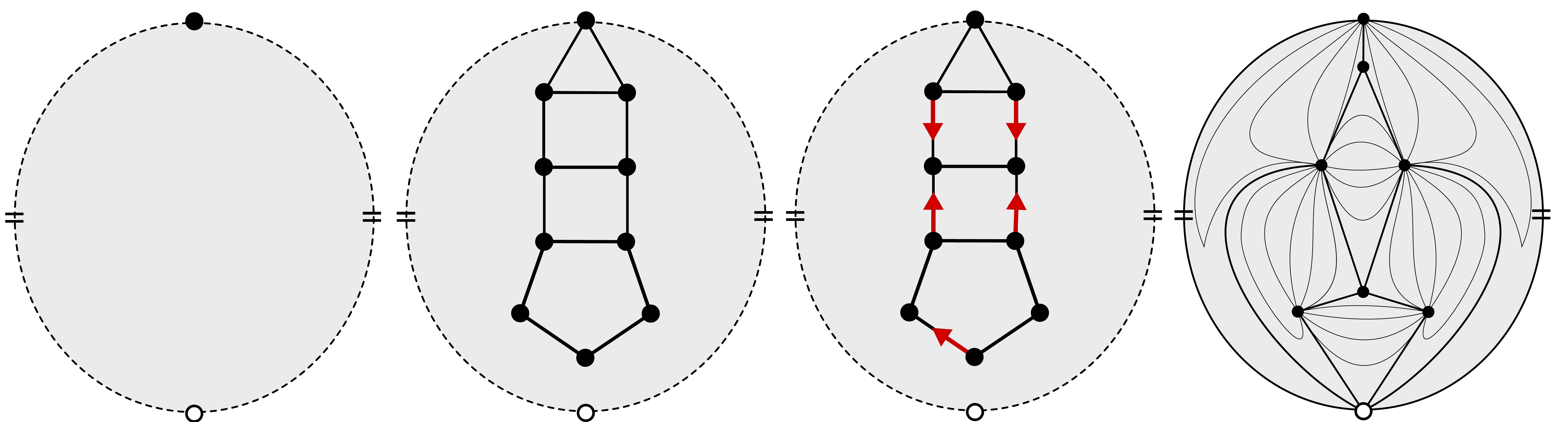}
\caption{A sphere, a decomposition, discrete and continuous line fields. }
\label{figure:sphere_complex_linefield}
\end{center}
\end{figure} 

A vertex $v \in (K, M_V) $ is {\it critical} if it is  unmatched in $M_V$. Its {\it index} is $1$ if it is critical and $0$ otherwise. Let $C(\Sigma)$ be the number of unmatched edges in the boundary walk of a face $\Sigma$. Then  $\Sigma$ is {\it critical}  if $C(\Sigma)\neq 2$. Its index is $1-C(\Sigma)/2$. As an example, let $K=(V,E,F)$ and $T$ be a spanning tree on the graph $(V,E)$. Then $T$ induces a Morse matching  $M_V$ in $(V,E)$, hence a discrete gradient line field $(K, M_V)$ with a single critical cell, the root of $T$. In Figure \ref{figure:sphere_complex_linefield}, the pentagon is a critical face. 

\begin{theorem}[Euler-Poincaré]
\label{theorem:euler_formula}
Let $(K,M_V)$ be a discrete gradient line field on $S$. Then
$$\chi(S)=\displaystyle \sum_{v\in V}index(v)+\sum_{\Sigma\in F}index(\Sigma)$$
\end{theorem}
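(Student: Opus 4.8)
The plan is to reduce the line-field Euler--Poincar\'e formula to the vector-field version, Theorem~\ref{theorem:forman_euler_formula}, by a bookkeeping argument that counts unmatched incidences. First I would observe that the surface decomposition $K=(V,E,F)$ satisfies $\chi(S)=|V|-|E|+|F|$, and that since $S$ is a closed surface every edge lies in the boundary walk of exactly two faces (counted with multiplicity when a face meets an edge from both sides). Hence $\sum_{\Sigma\in F} C(\Sigma) = 2|E|$, because summing $C(\Sigma)$ over all faces counts each \emph{unmatched} edge exactly twice, and every edge is unmatched in $M_V$ (the matching $M_V$ lives on $H_V$, so it only pairs vertices with edges, never edges with faces). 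Wait --- more carefully: $M_V\subseteq H_V$ matches a vertex to an incident edge; an edge is ``$M_V$-matched'' if it is paired with one of its endpoints. So $\sum_{\Sigma} C(\Sigma)$ counts, with multiplicity two, the number of edges not matched to any endpoint. Let $e_0$ be the number of $M_V$-matched edges (equivalently the number of non-critical vertices, since $M_V$ is a matching covering exactly those vertices and edges), and $e_1=|E|-e_0$ the number of unmatched edges; then $\sum_\Sigma C(\Sigma)=2e_1$.

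Next I would expand the right-hand side of the claimed identity using the definitions just given in the text. We have $\sum_{v\in V}\mathrm{index}(v)=\#\{v:v\text{ critical}\}=|V|-e_0$, since the non-critical vertices are exactly those covered by $M_V$, and a matching covers equally many vertices as edges among the cells it touches. For the faces, $\sum_{\Sigma\in F}\mathrm{index}(\Sigma)=\sum_{\Sigma\in F}\bigl(1-C(\Sigma)/2\bigr)=|F|-\tfrac12\sum_\Sigma C(\Sigma)=|F|-e_1=|F|-(|E|-e_0)$. Adding the two contributions gives
\[
\sum_{v\in V}\mathrm{index}(v)+\sum_{\Sigma\in F}\mathrm{index}(\Sigma)=(|V|-e_0)+(|F|-|E|+e_0)=|V|-|E|+|F|=\chi(S),
\]
which is exactly the assertion. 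Note this computation does not even use that $M_V$ is a \emph{Morse} matching (acyclicity plays no role in the count), only that it is a matching on $H_V$; the acyclicity will matter for the later homotopy statement, not here.

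Alternatively, and perhaps more in the spirit of the paper, I would phrase it as a corollary of Theorem~\ref{theorem:forman_euler_formula}: complete $M_V$ to a discrete gradient \emph{vector} field by adding, for each face $\Sigma$ with $C(\Sigma)\neq 0$, a maximal set of disjoint edge--face pairs from $H_F$ pairing $\Sigma$ with its unmatched boundary edges --- but this over-counts because one edge borders two faces, so one must be careful, and a face with $C(\Sigma)$ unmatched boundary edges can absorb at most $\lceil\text{something}\rceil$ of them; this route is messier. The main obstacle in the direct argument above is purely the incidence bookkeeping: making precise the claim $\sum_\Sigma C(\Sigma)=2e_1$ in the presence of faces whose boundary walk traverses an edge twice (e.g.\ a face glued to itself along an edge), and confirming that ``$e_0=$ number of non-critical vertices $=$ number of $M_V$-matched edges'' is exactly the statement that $M_V$ is a matching. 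Once those two identities are nailed down, the theorem follows by the one-line arithmetic displayed above. I would therefore organize the proof as: (i) set up the incidence counts, (ii) prove the two counting lemmas, (iii) substitute into the definition of index and conclude.
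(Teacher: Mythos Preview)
Your direct counting argument is correct. The key identities---$\sum_\Sigma C(\Sigma)=2e_1$ (each unmatched edge appearing with total multiplicity two in the collection of boundary walks) and $e_0=\#\{\text{non-critical vertices}\}=\#\{M_V\text{-matched edges}\}$---are exactly what is needed, and your observation that acyclicity of $M_V$ plays no role is accurate.

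The paper takes a slightly different route: it first invokes the Homotopy Theorem (Theorem~\ref{theorem:homotopy}) to reduce to the case $M_V=\emptyset$, and only then performs the counting. In that special case $e_0=0$, every vertex is critical, $C(\Sigma)$ is simply the length of the boundary walk of $\Sigma$, and $\sum_\Sigma C(\Sigma)=2|E|$; the arithmetic is then the one-liner $|V|+(|F|-|E|)=\chi(S)$. Your argument is the same computation carried out for general $M_V$, with the extra bookkeeping term $e_0$ cancelling between the vertex and face sums. What you gain is independence from Theorem~\ref{theorem:homotopy}: your proof is self-contained and shows that the Euler--Poincar\'e identity holds for \emph{any} matching on $H_V$, not just Morse matchings. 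What the paper's route buys is a cleaner statement once the homotopy reduction is in hand, and it illustrates that the reduction is index-preserving. Your ``alternative'' paragraph about completing $M_V$ to a full discrete vector field is indeed messier and unnecessary; you were right to prefer the direct count.
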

\begin{theorem}[Homotopy]
\label{theorem:homotopy}
The decomposition $K$ of a discrete gradient line field $(K,M_V)$ is homotopy equivalent to a decomposition $\overline{K}$ of a line field $(\overline{K},\emptyset)$ whose $p$-cells, for $p=0,2$, are the critical $p$-cells of $K$. The indices of the cells are preserved.
\begin{figure}[!h]
\begin{center}
\includegraphics[width=7cm]{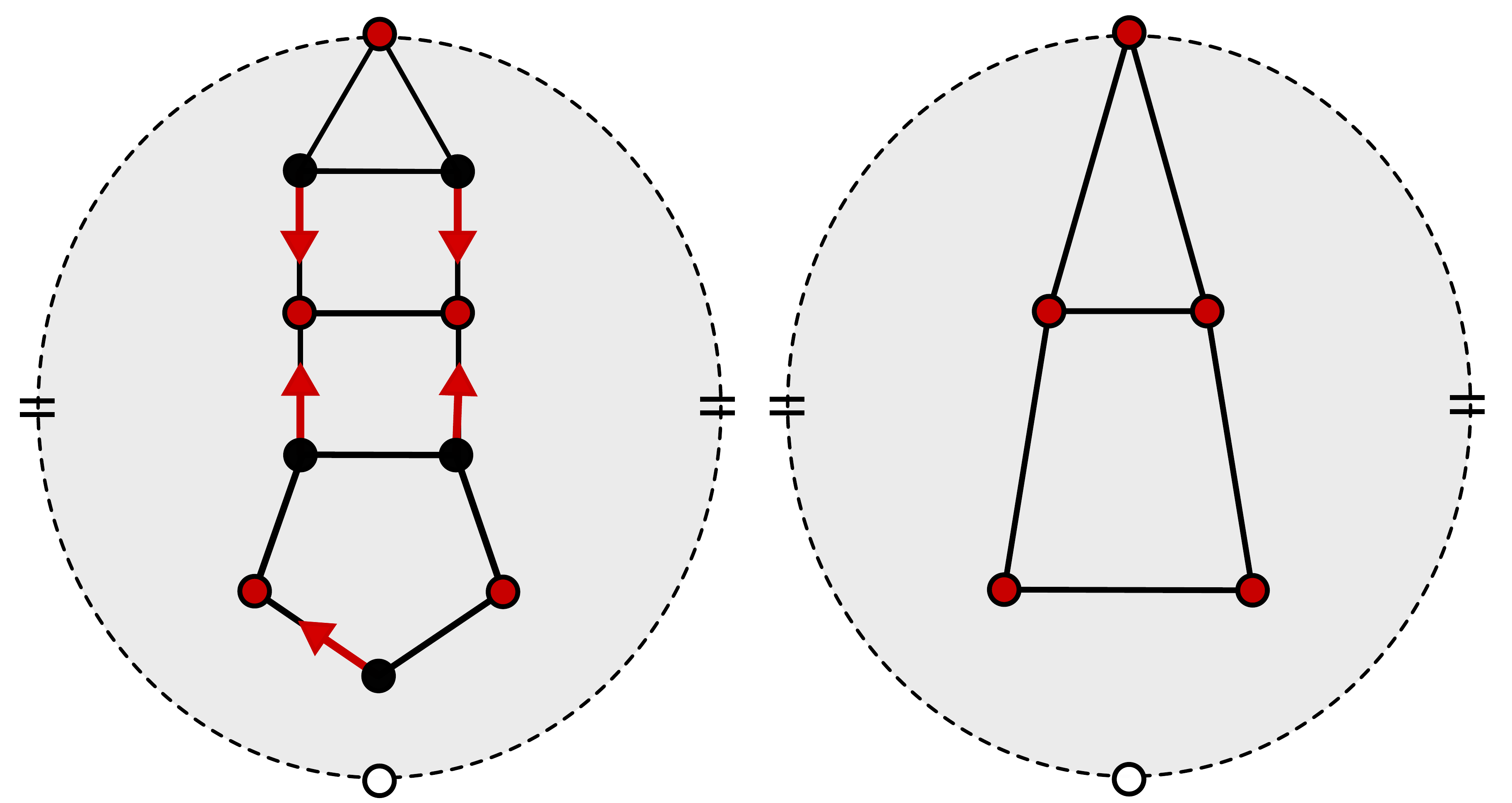}
\caption{$K$ and $\overline{K}$, two homotopy equivalent decompositions. }
\label{figure:last_theorem}
\end{center}
\end{figure} 
\end{theorem}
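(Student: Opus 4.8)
The plan is to mimic Forman's collapsing argument, but working only with the vertex matching $M_V$, which is genuinely Morse on $H_V$. First I would observe that $M_V$ is a Morse matching on the subgraph $H_V$ of the Hasse diagram, so by the acyclicity condition there is a partial order on the matched pairs; equivalently one can order the pairs $\{v,e\}\in M_V$ so that performing the corresponding elementary collapses one at a time is always legal. Each such collapse removes a free pair (a vertex $v$ and an edge $e$ having $v$ as a free face) without changing the homotopy type of the underlying space. I would carry out all these collapses in a valid order; what remains is a CW-decomposition $\overline{K}$ whose $0$-cells are exactly the critical vertices of $(K,M_V)$ (the unmatched vertices, e.g. the root of the spanning tree in the example) and whose $2$-cells are all the original faces of $K$, now with possibly longer boundary walks since collapsing an edge $e$ splices the two walk-neighbours of $e$ together. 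The $1$-skeleton of $\overline K$ is the graph obtained from $(V,E)$ by contracting the forest $M_V$.

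Next I would check that the index bookkeeping is preserved. The $0$-cells of $\overline K$ are precisely the index-$1$ vertices of $(K,M_V)$, and each keeps index $1$ trivially. For faces, the key point is that a collapse across an edge $e$ changes $C(\Sigma)$ only when $e$ lies on the boundary walk of $\Sigma$, and in that case removing $e$ from the walk decreases the count of unmatched boundary edges of $\Sigma$ by the multiplicity with which $e$ appears — but every edge on a boundary walk that is incident to a matched vertex is removed together with its matched vertex, so the net effect is that $C(\Sigma)$ counts, both before and after, exactly the edges surviving into $\overline K$ that bound $\Sigma$. Hence $C(\Sigma)$ is the same integer computed in $K$ and in $\overline K$, so $\mathrm{index}(\Sigma)=1-C(\Sigma)/2$ is unchanged, and the critical faces of $(K,M_V)$ are exactly the critical faces of $(\overline K,\emptyset)$ with the same index. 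I would also remark that this is consistent with Theorem \ref{theorem:euler_formula}, which the construction re-proves: collapsing free pairs preserves $\chi$, and in $\overline K$ the Euler characteristic is visibly $\sum_v \mathrm{index}(v) + \sum_\Sigma \mathrm{index}(\Sigma)$ by a direct vertex/edge/face count after one further normalization of the $1$-skeleton.

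The main obstacle I anticipate is purely combinatorial: verifying that the sequence of elementary collapses is actually admissible at every step. Because $M_V$ only matches in dimensions $0$ and $1$, a matched edge $e$ need not be a free face of the $1$-complex at the moment we wish to collapse it — another matched vertex could still be sitting on it — so one must invoke the acyclicity of the Morse matching to linearly order the pairs and argue by induction that, after the earlier collapses, $e$ has become free. This is exactly Forman's ``no closed $V$-paths imply collapsibility'' lemma restricted to $H_V$, and I would cite \cite{forman98_1,chari2000} for it rather than reprove it. A secondary subtlety, easily handled, is the degenerate case where a boundary walk traverses $e$ or its endpoints more than once, or where $\overline K$ has $1$-cells that are loops or multiple edges; none of this affects homotopy type or the index formula, but I would note that $\overline K$ is in general a genuine CW-complex rather than a simplicial one, just as in Forman's theory.
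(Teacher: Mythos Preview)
Your contraction of the forest $M_V$ correctly produces a homotopy-equivalent complex whose $0$-cells are the critical vertices and whose $1$-cells are the edges unmatched by $M_V$, but look at the $2$-cells: your $\overline K$ retains \emph{every} face of $K$, whereas the theorem asserts that the $2$-cells of $\overline K$ are exactly the \emph{critical} faces of $(K,M_V)$. A non-critical face $\Sigma$ (one with $C(\Sigma)=2$) survives your procedure as a bigon in $\overline K$, so your construction does not meet the statement. (A lesser issue: the operations you perform are edge contractions, not Whitehead elementary collapses---a matched vertex $v$ is almost never a free face of its edge in a surface decomposition---so the appeal to a collapsibility lemma is misplaced, although the homotopy equivalence itself survives.)

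The paper supplies precisely the missing half. It builds an auxiliary graph $G$ whose vertices are the edges of $K$ unmatched by $M_V$ and whose edges are the non-critical faces; a spanning forest of $G$ then yields a second matching $M_F$ on $H_F$, pairing each non-critical face with one of its two unmatched boundary edges. The union $M=M_V\cup M_F$ is a Morse matching on the full Hasse diagram $H(K)$, and Forman's Theorem~\ref{theorem:forman_homotopy} applied to $M$ produces the desired $\overline K$, now with only the critical faces as $2$-cells; a short argument then checks that the index of each surviving face is unchanged. Two lemmas dispose of the degenerate situations ($C(\Sigma)=0$, and a component of $G$ containing a cycle) by showing they force $S=\mathbb S^2$. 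Your argument is the correct first step; what is missing is exactly this $M_F$ that eliminates the non-critical faces.
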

 

Informally, our main result states that discrete restricted gradient line fields $(K,M_V)$ codify equivalence classes $[(S,L)]$ of (restricted) acyclic line fields which do not admit the last two types of critical points listed in Figure \ref{figure:critical_point}. 
We follow the line of thought presented at the end of the previous section. The first arrow in the diagram  is Theorem \ref{theorem:homotopy}. The Hasse diagram has to be replaced by a {\it radial graph}, to be defined below. 
$$(K,M_V)\ \ \xrightarrow{\mbox{Homotopy }}\ \ {(\overline{K},\emptyset)}\ \  \xleftrightarrow{\mbox{radial graph}}\ \ R(\overline{K})^{\mathcal{R}}$$
In analogy to the equivalence classes defined in the continuous contexts, we  identify two discrete line fields $(K,M_V)$ and $(K',\widetilde{M_V})$ with isomorphic radial graphs $R({\overline{K}})^\mathcal{R}$ and
$R(\overline{K'})^\mathcal{R}$.

In a tradition dating back to the construction of Riemann surfaces, Bronshtein and Nikolaev \cite{nikolaev97} approached the study of a Morse-Smale line field $(S,L)$  by considering a double covering $\widetilde S$ of $S$ with branches at the non-orientable critical points (the last four cases in Figure \ref{figure:critical_point}) on which the field $L$ suspends to a {\it vector} field $(\widetilde S,\widetilde L)$. Clearly, a vector field obtained in such a way gives rise to a Peixoto orgraph ${\tilde \chi}^{\mathcal{R}}$ with an additional symmetry, incorporated in their definition as an involution. Equivalence classes of gradient line fields are in correspondence with  Peixoto orgraphs \cite{nikolaev97}. We might follow their approach, by defining double coverings of discrete line fields $(K,M_V)$. Instead, taking into account the possible implementation of the constructions we suggest in this paper, we try to avoid the doubling of cells: we  project  ${\tilde \chi}^{\mathcal{R}}$ to obtain a {\it folded orgraph} $ \chi^{\mathcal{R}}$, which we define and show to be a radial graph in the section dedicated to proofs.

$$R(\overline{K})^{\mathcal{R}}  \hookleftarrow \chi^\mathcal{R}\ \ \xleftarrow{\mbox{Projection }} \ \ \tilde{\chi}^{\mathcal{R}} \xleftrightarrow{\mbox{BN \cite{nikolaev97} }}\ \ [(S,L)] $$


}
 As in \cite{archdeacon1992medial}, we define the \textit{radial graph} $R(K)$ of a decomposition $K=(V,E,F)$ of $S$. Its vertices consist of the vertices of $K$ and a point for each face of $K$. The edges indicate the adjacency relations between vertices and faces, and are represented by disjoint arcs in $S$. Thus $R(K)$ is embedded in $S$. The next result is a characterization of radial graphs \cite{archdeacon1992medial}.

\begin{theorem}
\label{theorem:radial_graph}
Embed a bipartite graph $G$ on a surface $S$  yielding a decomposition $K$. Suppose that either all  2-cells of $K$ have four edges along their boundary walk, or $K$ contains a single 2-cell, and the corresponding boundary walk traverses a single edge twice (in this case, $S$ is a 2-sphere).
Then, up to equivalence, there are only two decompositions $K_1$ and $K_2$ of $S$ whose radial graphs $R(K_i)$ are isomorphic to 
$G$. 
\end{theorem}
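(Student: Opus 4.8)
The plan is to show that the radial graph of any decomposition is a bipartite quadrangulation, and that this construction is reversible in exactly two ways — a decomposition and its dual — so that the quadrangulation $G$ in the statement pins down the unordered pair $\{K_1,K_2\}$ and nothing else.

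\textbf{Step 1: $R(K)$ is a bipartite quadrangulation.} By construction $R(K)$ is bipartite with colour classes $V$ and $F$, since every arc of $R(K)$ joins a vertex of $K$ to a face point. For the local structure, fix an edge $e$ of $K$, with endpoints $v_1,v_2$ ($v_1=v_2$ allowed) bordering faces $\Sigma_1,\Sigma_2$ ($\Sigma_1=\Sigma_2$ allowed): near $e$ the four $R(K)$-arcs $v_1p_{\Sigma_1}$, $p_{\Sigma_1}v_2$, $v_2p_{\Sigma_2}$, $p_{\Sigma_2}v_1$ bound a quadrilateral region $Q_e$ of $S\setminus R(K)$ containing $e$, and every face of $R(K)$ arises this way, so $e\mapsto Q_e$ is a bijection from $E$ onto the faces of $R(K)$. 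Hence $R(K)$ is a bipartite quadrangulation of $S$; the only degeneracy is exactly the exceptional clause of the statement, where $S$ is a $2$-sphere and the single face of $R(K)$ repeats both of its edges, and I would dispatch it by a direct check at the end.

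\textbf{Step 2: the two reconstructions.} Conversely, given $G\hookrightarrow S$ as in the statement, note that $G$ is connected (a $2$-cell embedding forces this), so its colour classes $A$ and $B$ are unique up to a swap. Form $K_A$ by placing a vertex at each $a\in A$; for each face $\phi=a_1b_1a_2b_2$ of $G$ draw one edge $e_\phi$ inside $\phi$ joining the two $A$-corners $a_1,a_2$; declare the faces of $K_A$ to be indexed by $B$, the edges around $b\in B$ cyclically ordered as in the $G$-rotation at $b$. The point requiring proof is that this is a genuine $2$-cell embedding: the faces of $G$ incident to a fixed $b$, listed in rotational order, are glued along the $G$-edges at $b$, each contributing the triangular ``$b$-side'' of its quadrilateral cut off by $e_\phi$; because each such region is a triangle — here the quadrangulation hypothesis enters — these pieces close up around $b$ into an open disc whose boundary walk is the cyclic list of the $e_\phi$. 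A cell count recovers $\chi(S)$, and reading the picture of Step 1 in reverse shows each $\phi$ reappears as $Q_{e_\phi}$, so $R(K_A)$ is equivalent to $G$ as an embedded graph. Swapping $A$ and $B$ yields $K_B=K_A^{\ast}$, the dual decomposition, which likewise satisfies $R(K_B)\cong G$.

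\textbf{Step 3: uniqueness, and the main obstacle.} Let $K'$ be any decomposition of $S$ with $R(K')$ equivalent to $G$. Since $R(K')$ is connected and bipartite with classes $V(K')$ and $F(K')$, the equivalence matches either $V(K')\to A,\,F(K')\to B$ or $V(K')\to B,\,F(K')\to A$. In the first case the bijection of Step 1 identifies $E(K')$ with the faces of $G$, each $e'\leftrightarrow\phi$ joining the two $A$-corners of $\phi$ and bordering the two $B$-corners; moreover the cyclic order of the edges of $K'$ at a vertex $v'\leftrightarrow a$ is read off from the $R(K')$-rotation at $a$, i.e.\ from the $G$-rotation at $a$ (in the non-orientable case the edge twists are recovered in the same way from the embedding scheme of $G$; in the oriented case Theorem~\ref{theorem:rotationalsystems} packages this step). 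Hence $K'$ is equivalent to $K_A$, and in the second case to $K_B$, giving exactly the two decompositions claimed. I expect the real work to be Step 2 — verifying that the reconstructed complex has genuine disc faces — together with the bookkeeping for loops, for an edge bordering a single face on both sides, and for the degenerate spherical case, where the ``quadrilaterals'' collapse and the bijections of Step 1 must be read with multiplicity.
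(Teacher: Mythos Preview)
Your approach matches the paper's. The paper does not give a full proof of this result --- it is quoted from \cite{archdeacon1992medial} --- and only records the construction of $K_1$ and $K_2$: split the bipartite vertex set as $V_1\sqcup V_2$, take $V_1$ as the vertices of $K_1$, draw a diagonal of each quadrilateral 2-cell joining its two $V_1$-corners to get the edges, and identify the resulting faces with $V_2$. This is exactly your Step~2, and your Steps~1 and~3 supply the surrounding argument (that $R(K)$ is always a bipartite quadrangulation, and that any $K'$ with $R(K')\cong G$ must coincide with one of the two reconstructions) which the paper leaves to the reference.
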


The decompositions in the statement are obtained as follows --- we handle the case when $S$ is not the 2-sphere. For $G=(V,E)$, split $V = V_1 \sqcup V_2$ using that $V$ is bipartite. We construct $K_1$. Its set of vertices is $V_1$. Edges are diagonals of each 2-cell joining two vertices in $V_1$. Finally, faces are components of $S \setminus {V_1\cup E}$, and are  identified with the vertices in $V_2$.

\bigskip
As in the previous section, where we constructed  the Hasse diagram of $\overline{K}$ in terms of $K$, we may construct the radial graph $R(\overline{K})$ of $\overline{K}$ directly from $(K,M_V)$ by using the gradient paths of dimension $0$. The vertices of $R(\overline{K})$ are the critical cells (vertices and faces). Each edge corresponds to a pair consisting of a critical face $\Sigma$ and a critical vertex $u$ such that there is a gradient path connecting $u$ with some vertex $v$ in $\Sigma$. Finally, $R(\overline{K})^{\mathcal{R}}$ is obtained from $R(\overline{K})$ by introducing the natural rotational system induced by $S$. From Theorem  \ref{theorem:radial_graph}, the map ${(\overline{K},\emptyset)}\ \longleftrightarrow\ R(\overline{K})^{\mathcal{R}}$ is a bijection. The first   picture in  Figure \ref{figure:sphere_complex_linefield_Ggraph_morsesmale} is a line field $(K, M_V)$ for which we indicated a gradient path, or equivalently, an edge of $R(\overline{K})$. In the second picture, all such edges are represented and the rotational system gives rise to 2-cells. Finally the third picture shows the associated radial graph.

\begin{theorem}
\label{theorem:quads_theorem}
Every equivalence class of  restricted acyclic line fields admits a folded orgraph isomorphic to a radial graph, which in turn corresponds to an equivalence class of  discrete line fields.
\end{theorem}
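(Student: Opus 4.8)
The plan is to realise the statement by concatenating the two diagrams displayed above, so that the work reduces to making each arrow precise and checking that the identifications are compatible. Begin with an equivalence class $[(S,L)]$ of restricted acyclic line fields. Following Bronshtein--Nikolaev, pass to the branched double cover $\pi\colon\widetilde S\to S$, branched exactly at the non-orientable critical points admitted in the restricted setting, on which $L$ suspends to an acyclic Morse--Smale vector field $(\widetilde S,\widetilde L)$; let $\tau$ be the deck involution, which conjugates $\widetilde L$ with $-\widetilde L$. Taking Peixoto orgraphs (\cite{peixoto73,nikolaev97}) produces $\tilde\chi^{\mathcal R}$ together with the induced graph involution $\tau_*$; since $\tau$ reverses the flow, $\tau_*$ interchanges the colour classes of index-$0$ and index-$2$ critical points and preserves, setwise, the class of saddles. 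Define the folded orgraph $\chi^{\mathcal R}:=\tilde\chi^{\mathcal R}/\tau_*$, embedded in $S=\widetilde S/\tau$ with the rotational system descending from $\widetilde S$; as both surfaces are oriented, the two rotational systems are compatible, just as for the vector-field orgraphs discussed above.

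The core step is to check that $\chi^{\mathcal R}$ satisfies the hypotheses of Theorem~\ref{theorem:radial_graph}. Being the Peixoto orgraph of a gradient vector field on an oriented surface, $\tilde\chi^{\mathcal R}$ is tripartite and each $2$-cell of its embedding has four edges on its boundary walk, with the single exception of the sphere, where one $2$-cell traverses an edge twice. Folding by $\tau_*$ collapses the index-$0$ and index-$2$ colours to a single colour --- destined to be the critical vertices of the target decomposition --- while the saddle colour becomes the other colour --- the critical faces --- so $\chi^{\mathcal R}$ is bipartite. It remains to control the boundary walks under the quotient: away from the branch locus $\tau$ identifies $2$-cells in pairs and quadrilaterals map to quadrilaterals; over a branch point two quadrilaterals of $\widetilde S$ are folded onto a single face of $S$ that meets the branch point as a repeated corner, and one must verify, type by type over the admissible critical points (the first four in Figure~\ref{figure:critical_point} and those of Figure~\ref{figure:critical_points_foliation}), that this face is again a quadrilateral, and similarly that the spherical degeneracy survives folding. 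I expect this local case analysis near the branch points to be the main obstacle.

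Granting the hypotheses, Theorem~\ref{theorem:radial_graph} yields, up to equivalence, two decompositions $K_1,K_2$ of $S$ with $R(K_i)\cong\chi^{\mathcal R}$; select the one, say $K_1$, whose vertex set is the colour class arising from the index-$0$/index-$2$ critical points. All $2$-cells of $K_1$ are quadrilaterals, so $C(\Sigma)=4\neq2$ and each is critical, and every vertex of $K_1$ is unmatched; hence $(K_1,\emptyset)$ is a discrete gradient line field with $\overline{K_1}=K_1$, and by the bijection ${(\overline K,\emptyset)}\leftrightarrow R(\overline K)^{\mathcal R}$ recorded after Theorem~\ref{theorem:radial_graph} its class is pinned down by $\chi^{\mathcal R}$. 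Finally one checks well-definedness on equivalence classes: an isomorphism of folded orgraphs induces an isomorphism of radial graphs, hence, by that same bijection, an equivalence of the reduced decompositions; indeed the equivalence class of discrete line fields in question is by definition the isomorphism class of $R(\overline K)^{\mathcal R}$, which we have just identified with that of $\chi^{\mathcal R}$. This chain is precisely the correspondence asserted by the theorem.
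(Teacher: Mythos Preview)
Your overall strategy coincides with the paper's: pass to the branched double cover, take the Peixoto orgraph $\tilde\chi^{\mathcal R}$, fold by the deck involution, and verify the hypotheses of Theorem~\ref{theorem:radial_graph}. Two points deserve correction.

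First, the ``main obstacle'' you anticipate dissolves. The branch points of $p\colon\widetilde S\to S$ lie at the non-orientable critical points of $L$, and these are \emph{vertices} of $\tilde\chi$; hence $p$ restricted to the interior of any face $\widetilde\Sigma$ of $\tilde\chi^{\mathcal R}$ is a homeomorphism, and $p(\widetilde\Sigma)$ is again a quadrilateral. No case-by-case analysis over critical-point types is needed. The paper proves the prerequisite --- that every face of $\tilde\chi^{\mathcal R}$ is a quadrilateral when $Sad\neq\emptyset$ --- by a short Euler-characteristic argument: double a face $\Sigma$ along its boundary to obtain a sphere on which the saddle vertices have index~$0$, forcing exactly two extrema on the equator. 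You assert this fact but do not justify it.

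Second, your final paragraph contains a genuine error. The $2$-cells of the decomposition $K_1$ produced by Theorem~\ref{theorem:radial_graph} are \emph{not} the quadrilateral faces of the radial-graph embedding; they are in bijection with the vertices of the other colour class (here $p(Sad)$), and the length of the boundary walk of such a face equals the degree of the corresponding saddle vertex in $\chi$. Thus $C(\Sigma)$ need not equal~$4$ (indeed, the various admissible saddle types yield different values). Your conclusion that $(K_1,\emptyset)$ is a discrete gradient line field with $\overline{K_1}=K_1$ is nevertheless correct for the right reason: with the empty matching every vertex is critical, and every face is critical provided its boundary walk has length $\neq 2$, which follows from the allowed saddle degrees in the restricted setting rather than from any quadrilateral property of $K_1$.
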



\begin{figure}[h]
\begin{center}
\includegraphics[width=14cm]{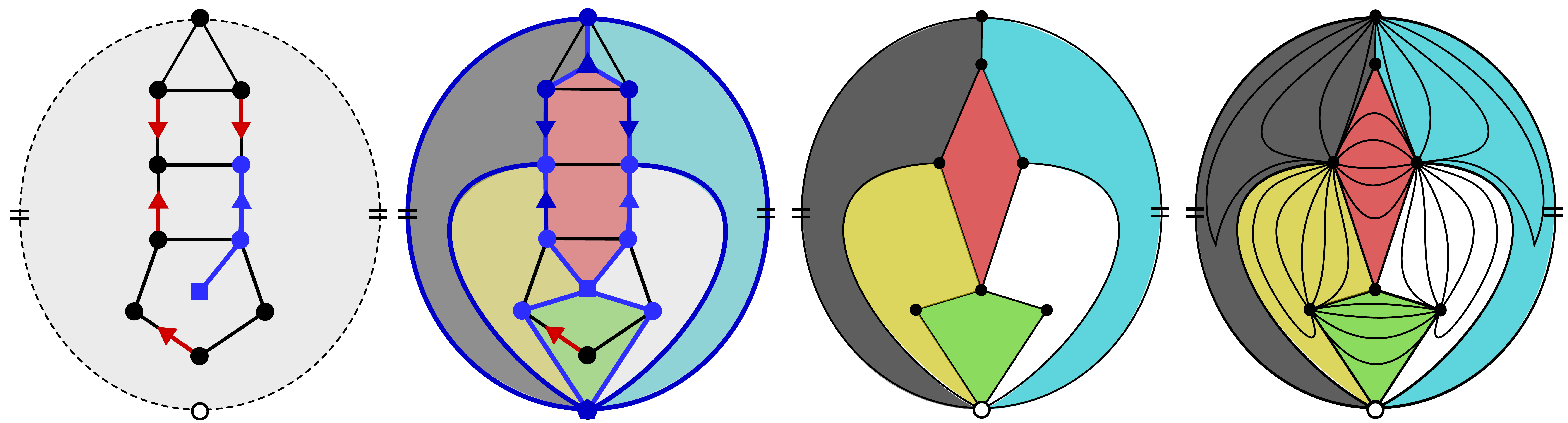}
\caption{Discrete gradient line field $(K,M_V)$. Radial graph $R({\overline K})^{\mathcal R}$. Folded orgraph. Line field.}
\label{figure:sphere_complex_linefield_Ggraph_morsesmale}
\end{center}
\end{figure} 

\section{Proofs}
This final section is dedicated to sketches of proofs of the main theorems  concerning discrete line fields: the homotopy Theorem \ref{theorem:homotopy}, from which we derive the Euler-Poincaré formula, Theorem \ref{theorem:euler_formula},  and  Theorem \ref{theorem:quads_theorem}.

We use an auxiliary construction --- a Morse matching $M$ on the  Hasse diagram $H(K)$ as defined above Theorem \ref{theorem:forman_euler_formula} --- which enables us to convert the proof of Theorem \ref{theorem:homotopy} into Forman's argument yielding Theorem \ref{theorem:forman_homotopy}. 

Let $(K, M_V)$ be a discrete gradient line field on a compact, oriented surface $S$. Define a graph $G$ as follows: the vertices are the unmatched edges in $M_V$, and the edges are the non critical faces (i.e., faces with exactly two unmatched edges in its boundary walk). As an example, consider Figure \ref{figure:graph_G}.

\begin{figure}[h]
\begin{center}
\includegraphics[width=12cm]{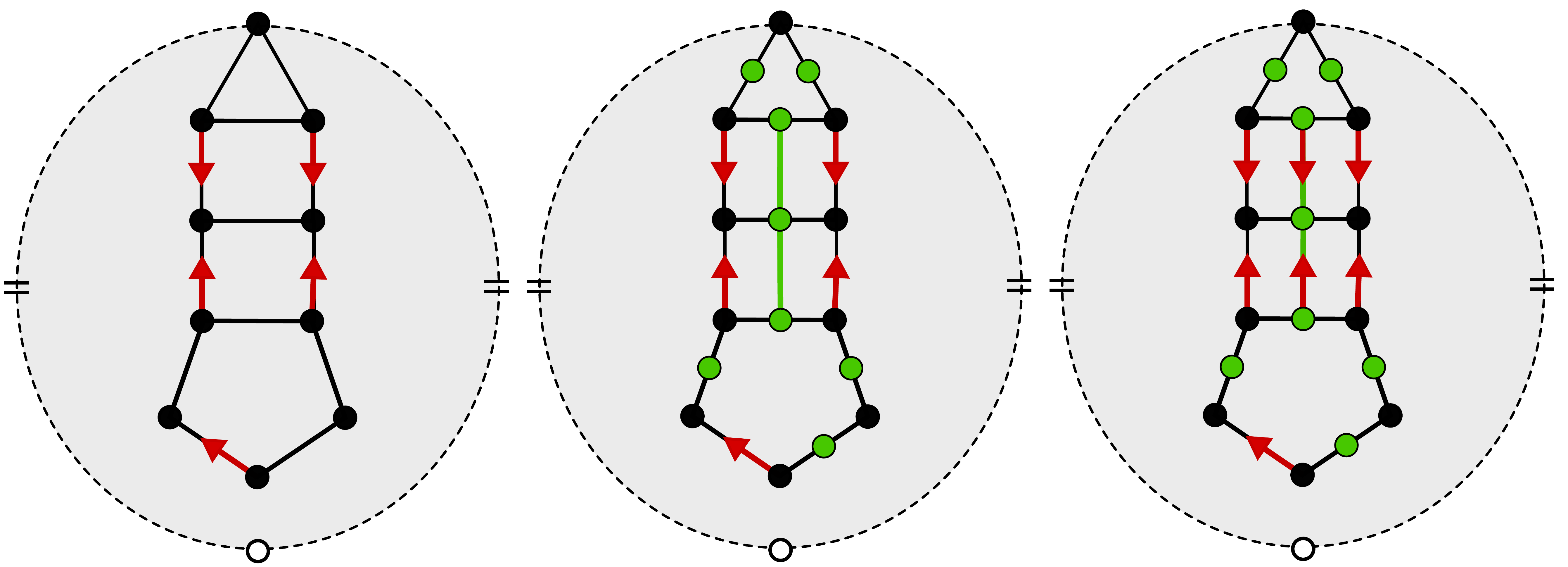}
\caption{Critical cells of $(S,L)$. The graph $G$. The induced Morse matching.}
\label{figure:graph_G}
\end{center}
\end{figure} 	

For each connected component $G_i$ of $G$,  a spanning tree $T_i$   induces a Morse matching $M_i$ in $H_F$ restricted to the cells in $G_i$, as described before Theorem \ref{theorem:euler_formula}. The Morse matching $M_F$ is just the union of the $M_i$'s. Finally the \textit{induced} Morse matching $M$ on $H(K)$ is defined by $M = M_V\cup M _F$.
We treat two annoying cases separately. 

\begin{lemma}
\label{lemma:sphere}
The decomposition induced by a critical face $\Sigma$ of index $1$ is sphere.
\begin{proof}
If the index of a critical face $\Sigma$ is $1$ (when $C(\Sigma)=0$), all edges in the boundary walk of $\Sigma$ are matched with vertices in $\Sigma$, and the matching has to be acyclic, because $M_V$ is acyclic. In other words, the complex induced by the edges in the boundary of $\Sigma$ is a tree. Applying  Theorem \ref{theorem:forman_homotopy} in the complex induced by $\Sigma$ and the matching $M_V$ is a critical vertex and a critical face, which is a sphere. Hence the next lemma follows.
\end{proof}
\end{lemma}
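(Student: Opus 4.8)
The plan is to show that the subcomplex $L$ consisting of the critical face $\Sigma$ together with the edges and vertices appearing in its boundary walk is a closed surface with $\chi(L)=2$, hence a $2$-sphere, and then to observe that a closed $2$-manifold sitting inside the connected surface $S$ is necessarily all of $S$.

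First I would unwind the hypothesis. Index $1$ means $C(\Sigma)=0$, so every edge of the boundary walk $\sigma_1,\sigma_2,\ldots,\sigma_k$ of $\Sigma$ is matched in $M_V$, necessarily with one of its two endpoints. Let $\Gamma\subseteq(V,E)$ be the subgraph spanned by these edges; it is connected (the walk is) and $M_V$ restricts to an acyclic matching of $\Gamma$ that saturates all of its edges. I claim $\Gamma$ is a tree. If it carried a cycle $w_1,f_1,w_2,f_2,\ldots,w_m,f_m$ (with $f_i$ joining $w_i$ and $w_{i+1}$), orient each $f_i$ toward its $M_V$-partner; since $M_V$ is a matching, each $w_j$ is the head of at most one of these arcs, and as the cycle has equally many edges and vertices, each $w_j$ is the head of exactly one. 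This forces the pairs $\{f_i,\text{head}(f_i)\}$ to be one full alternating class of the closed cycle $w_1,f_1,\ldots,w_m,f_m$ of $H_V$, with all those Hasse edges lying in $M_V$ --- contradicting that $M_V$ is a Morse matching. Hence $\Gamma$ is a tree, and $M_V|_\Gamma$ has a single critical (unmatched) vertex, the root $v_0$.

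Next I would use that every edge of a tree is a bridge, so the closed walk $\sigma_1,\ldots,\sigma_k$ traverses each edge of $\Gamma$ an even number of times; but each $1$-cell of $K$ occurs exactly twice among the boundary walks of the $2$-cells of $S$ (counted with multiplicity), so each edge of $\Gamma$ occurs exactly twice in $\partial\Sigma$ and borders no other face. Thus $L=\Gamma\cup\overline\Sigma$ is exactly the surface obtained by gluing the boundary edges of the polygon $\overline\Sigma$ in pairs --- a closed surface --- and counting cells gives $\chi(L)=\chi(\Gamma)+1=2$; inheriting orientability from $S$, we get $L\cong S^2$. (Equivalently, the restricted discrete gradient vector field $(L,M_V)$ has exactly one critical vertex $v_0$, one critical face $\Sigma$ and no critical edge, so Forman's homotopy Theorem \ref{theorem:forman_homotopy} gives $L\simeq S^2$, and a closed surface homotopy equivalent to $S^2$ is $S^2$.) Finally, $L$ is a closed $2$-manifold inside the connected surface $S$; by invariance of domain it is open in $S$, it is obviously closed, and $S$ is connected, so $L=S$ and $S\cong S^2$.

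The step I expect to be the main obstacle is proving that $\Gamma$ is a tree: turning a hypothetical cycle of $\Gamma$ into an honest alternating class of a Hasse cycle contained in $M_V$ needs the saturation hypothesis and the edge/vertex count on the cycle handled carefully. The other delicate point is the passage from "$\Gamma$ is a tree" to "$L$ is genuinely a closed surface", i.e. that each boundary edge of $\Sigma$ is traversed exactly twice; the bridge argument settles this, and the Euler-characteristic shortcut quietly relies on it.
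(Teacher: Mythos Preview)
Your argument is correct and follows the same line as the paper's: both hinge on showing that the boundary graph $\Gamma$ of $\Sigma$ is a tree (you supply the alternating-cycle contradiction that the paper compresses into ``because $M_V$ is acyclic''), and then deduce the sphere via Forman's homotopy Theorem~\ref{theorem:forman_homotopy} (you also offer the equivalent Euler-characteristic count). The extra steps you include --- the bridge argument showing each boundary edge is traversed exactly twice so that $L$ is genuinely a closed surface, and the invariance-of-domain passage from $L\cong S^2$ to $S=L$ --- are points the paper's sketch leaves implicit.
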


\begin{lemma}
\label{lemma: sphere_projective}
Let $G_i$ be a connected component of $G \subset S$ which contains a cycle $\gamma$, a collection of faces and edges. Then the closure of $\gamma$ is a sphere $\mathbb{S}^2$. 
\begin{figure}[h]
\begin{center}
	\includegraphics[width=14cm]{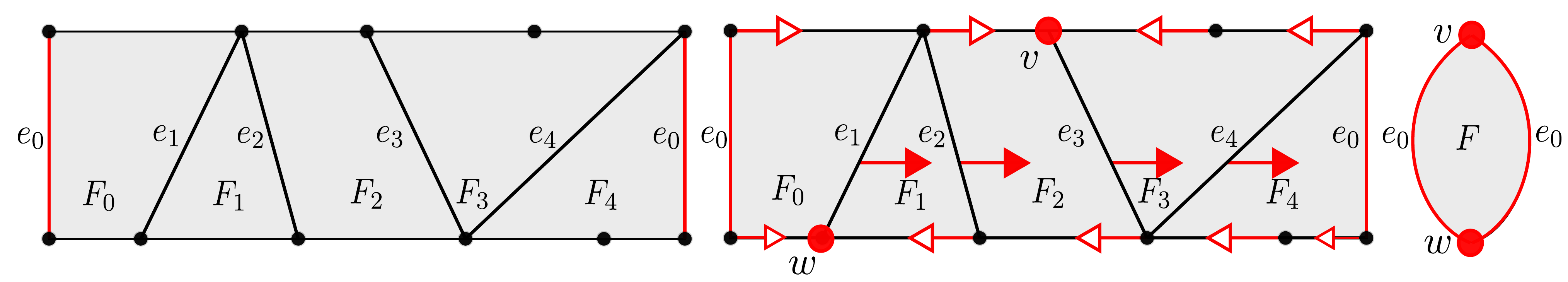}
	\caption{A cycle $\gamma$, the Morse matching induced by $(K, M_V)$ and what's left after simplifying as in Forman's homotopy theorem. }
	\label{figure:sphere_projective}
\end{center}
\end{figure}
\begin{proof}
Take a cycle $\gamma\longleftrightarrow e_0F_0e_1F_1\ldots e_kF_ke_0$ in the graph $G$. 
The matching $M_F$  above has exactly one critical face and one edge in $\gamma$ (see Figure \ref{figure:sphere_projective}), and the rest of edges on the boundary walk of the faces in $\gamma$ are matched, since all face $F_i\in \gamma$ has just two unmatched edges and they belong to $\gamma$. Each edge in $K$ belongs to exactly two faces in $K$. Applying Theorem \ref{theorem:forman_homotopy} in the complex induced by $\gamma$, we are left with the decomposition of a sphere. In particular, all faces of the decomposition $K$ of $S$ belong to the cycle $\gamma$. and $S$ is a sphere or a projective plane. Since $S$ is orientable, the latter case does not happen.
\end{proof}
\end{lemma}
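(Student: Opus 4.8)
The plan is to show that every face of $K$ already occurs in $\gamma$, so that the closure of $\gamma$ is all of $S$, and then to pin down $S$ by a single Forman collapse together with orientability. Write the cycle as $\gamma\longleftrightarrow e_0F_0e_1F_1\cdots e_kF_ke_0$ (indices modulo $k+1$): each $e_j$ is an edge of $K$ unmatched in $M_V$, each $F_j$ is a face whose boundary walk contains \emph{exactly} the two unmatched edges $e_j,e_{j+1}$ (every other edge of $\partial F_j$ being matched in $M_V$ with a vertex), and, since each edge of a surface decomposition bounds exactly two faces, $e_j$ lies precisely in $F_{j-1}$ and $F_j$. Let $L\subseteq K$ be the subcomplex spanned by $F_0,\dots,F_k$, so that the closure of $\gamma$ is $|L|$. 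The crux is the claim $L=K$.

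To prove it, I would collapse $L$ along the restriction $M_V|_L$ — acyclic, being the restriction of an acyclic matching — and apply Forman's homotopy theorem (Theorem~\ref{theorem:forman_homotopy}, in its CW version): $|L|$ is homotopy equivalent to the complex $\overline L$ whose cells are the critical cells of $M_V|_L$, namely the faces $F_0,\dots,F_k$, the unmatched edges $e_0,\dots,e_k$, and finitely many critical vertices. Since $M_V$ touches no face, there are no gradient paths of dimension $1$ here, so the Morse boundary of $F_j$ in $\overline L$ is just the sum of the critical $1$-cells lying on $\partial F_j$, that is $\partial_2 F_j = e_j + e_{j+1}$ over $\mathbb{Z}_2$. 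A direct computation then gives $H_2(\overline L;\mathbb{Z}_2)=\langle F_0+\cdots+F_k\rangle\cong\mathbb{Z}_2$, hence $H_2(L;\mathbb{Z}_2)\neq 0$. On the other hand, $S$ is a closed surface with no $3$-cells, so its only $\mathbb{Z}_2$ $2$-cycle is the sum of all faces of $K$; a subcomplex that misses a face therefore has $H_2(-;\mathbb{Z}_2)=0$. Thus $L$ cannot be a proper subcomplex, so $L=K$ and every face of $K$ occurs in $\gamma$.

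It follows that the closure of $\gamma$ is all of $S$ and that $\overline L$ is a CW model of $S$ with $k+1$ faces, $k+1$ edges and $m_0$ critical vertices. Being a nonempty complex with edges, $\overline L$ has at least one vertex, so $m_0\ge 1$; and $\chi(S)=\chi(\overline L)=m_0$. Since $S$ is a closed surface, $\chi(S)\le 2$, so $\chi(S)\in\{1,2\}$. The value $1$ would make $S$ non-orientable — a projective plane — contrary to hypothesis, so $\chi(S)=2$ and $S=\mathbb{S}^2$; the closure of $\gamma$, being $S$, is a sphere.

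The delicate point is the second paragraph: one has to verify carefully that collapsing the matched edges interspersed along each $\partial F_j$ leaves exactly the contribution $e_j+e_{j+1}$ to the Morse boundary, which is precisely where the hypotheses ``$F_j$ carries exactly two unmatched edges'' and ``each edge bounds exactly two faces'' enter, and that the acyclicity of $M_V$ makes the collapse legitimate. The remaining homology bookkeeping and the Euler-characteristic classification of closed surfaces, refined by orientability, are routine; the companion Lemma~\ref{lemma:sphere} disposes of the analogous degenerate case of a single index-$1$ critical face.
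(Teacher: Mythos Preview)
Your argument is correct, and its overall shape matches the paper's: restrict attention to the subcomplex $L$ spanned by the faces of $\gamma$, apply Forman's homotopy theorem there, deduce that $L$ must exhaust $K$, and finish by orientability. The difference lies in \emph{which} matching you collapse along and how you extract the conclusion. The paper uses the full induced matching $M=M_V\cup M_F$, where $M_F$ comes from the spanning tree on the component $G_i$; along the cycle $\gamma$ this pairs all but one $(e_j,F_j)$, so the collapsed complex has a single critical $2$-cell and a single critical $1$-cell, from which the sphere is read off directly. You instead collapse only along $M_V|_L$, keep all $k+1$ faces and all $k+1$ edges as critical, and then compute $H_2(\overline L;\mathbb{Z}_2)=\langle F_0+\cdots+F_k\rangle$ by hand, using the ``circular'' boundary $\partial_2 F_j=e_j+e_{j+1}$; the nonvanishing $H_2$ forces $L=K$, and the Euler characteristic $\chi(S)=m_0\in\{1,2\}$ together with orientability finishes. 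Your route is more self-contained --- it does not rely on the auxiliary $M_F$ construction --- and makes explicit the homological reason a proper subcomplex is impossible; the paper's route is shorter because $M_F$ has already been set up for the proof of Theorem~\ref{theorem:homotopy} and does the cancellation for you. One small point worth recording in your write-up: when you restrict $M_V$ to $L$, a vertex $v\in L$ matched in $M_V$ to an edge outside $L$ becomes critical in $M_V|_L$, so $m_0$ may a priori exceed the number of $M_V$-critical vertices in $L$; this is harmless since you only use $m_0$ after establishing $L=K$, where the distinction disappears.
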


\bigskip
\begin{proof}[ Proof of Theorem \ref{theorem:homotopy}]
By  Theorem \ref{theorem:forman_homotopy} applied to the induced Morse matching $M$ in $H(K)$, we learn that $K$ is homotopy equivalent to another decomposition $\overline{K}$ with exactly the same critical cells that $M$. To complete the proof,  we must prove that $(K,M_V)$ and $(\overline{K},\emptyset)$ have  the same critical cells and indices.
The cases  handled in the two  lemmas above imply trivial decompositions of $S = \mathbb{S}^2$.

The critical vertices of $M$ and $M_V$ are the same: by definition $M=M_V\sqcup M_F$ and $M_F$ is a matching in $H_F$.

Let $\Sigma$ be a critical face of $(K,M_V)$. The case $C(\Sigma)=1$ is treated in Lemma 8 and $C(\Sigma)=2$ is a noncritical face.   Suppose then $C(\Sigma)> 2$. By construction, $\Sigma$ is not contained on the graph $G$. By the definition of the Morse matching $M$,  $\Sigma$ must be unmatched and hence corresponds to a face $\overline \Sigma$ of $H(\overline K)$. Also, $index(\Sigma) = 1 - C(\Sigma)/2$, where we label the unmatched edges on the boundary of $\Sigma$ as $\{\sigma_1,\sigma_2,\ldots,\sigma_{C(\Sigma)}\}$. Each edge $\sigma_i$ belongs to a connected component $G_i$ of $G$. For each $G_i$ there is exactly one critical edge $\sigma$ and exactly one gradient path from $\sigma_i$ to $\sigma$. Thus $index(\Sigma) = index(\overline{\Sigma})$, since the adjacency of faces in $\overline{\Sigma}$ are given by the gradient paths of $M$  from  edges in the boundary of $\Sigma$ to critical edges.
\end{proof}

\begin{proof}[ Proof of Theorem \ref{theorem:euler_formula}]
From Theorem \ref{theorem:homotopy}, it is enough to prove the case $M_V=\emptyset$. By  Euler's formula,
$$\chi(K)=\sharp V-\sharp E+\sharp F.$$
Since all vertices are critical, $\sharp V=\displaystyle \sum_{v\in V}index(v)$. In the sum
$$
\displaystyle \sum_{\Sigma\in F}index(\Sigma) = \sharp F-\sum_{\Sigma\in F} \frac{C(\Sigma)}{2},
$$ 
each edge $e$ is contained in the boundary walk of exactly two faces in $K$ and the sum on the right hand side then 
is the number of edges in  $K$. 
\end{proof}	

We now prepare for the proof of Theorem \ref{theorem:quads_theorem}.
Following \cite{nikolaev97}, a 2-branched covering $p:\widetilde S\to S$ induces the suspension of a Morse-Smale line field $(S,L)$ to a vector field $(\widetilde S, V = \widetilde L)$  with branches at the non-orientable critical points of $L$. The vector field in turn gives rise to a \textit{Peixoto orgraph} ${\tilde \chi}^{\mathcal{R}}$, an orgraph ${\tilde \chi}$ with a rotational system $R$. More precisely, 
the orgraph $\widetilde{\chi}$ is a tripartite graph, consisting of an upper level $Max$  of maximum (critical) points, a lower level $Min$ of minimum points and a middle level $Sad$ containing the saddles with  2, 4 or 6 separatrices. Additional properties follow from the fact that $V$ was obtained from a double covering. Thus,  there is an involution $\theta$ in the set of vertices which induces a bijection  $\theta:Max \to Min$. Also, $\theta$ restricts to $Sad$, keeping fixed the saddles with 2 and 6 separatrices and coupling saddles with 4 separatrices. The edge set of  $\widetilde{\chi}$ consist of the separatrices joining saddles to vertices in the extremal levels.

The orientation of the orbits in $\widetilde L$ induces naturally a orientation in the edges of $\widetilde{\chi}$. If $Sad=\emptyset$ then $\widetilde{\chi}$ consists of a pair maximum-minimum  connected by an edge --- $\widetilde S$ is a sphere. Finally the rotational system $\mathcal{R}$ is given by the orientation of the surface $\widetilde S$ and induces, as in Theorem \ref{theorem:rotationalsystems}, a collection of faces.

\begin{lemma}
Every face obtained from the Peixoto orgraph ${\tilde \chi}^{\mathcal{R}}$ of an acyclic Morse-Smale line field $(S,L)$ with $Sad \ne \emptyset$ has 4 edges in its boundary walk. 
\begin{proof}
Let $\Sigma$ be a face of ${\tilde \chi}^{\mathcal{R}}$. Clearly, the are no separatrices between saddles (since the fields are Morse-Smale) and between maxima and minimum. In particular, the number of edges in the boundary walk of $\Sigma$ is even. Now attach two copies of $\Sigma$ along its boundary, which becomes an equator to a sphere $(\mathbb{S}^2, V)$ with copies of the field in $\Sigma$ on each hemisphere.
In the sphere, the vertices of $\Sigma$ which represent saddle points of $\widetilde{L}$ have index zero (they are destroyed by small perturbations) and  maxima and minima are maintained. Since the Euler characteristic of $(\mathbb{S}^2, V)$ is 2, the total number of maxima and minima must be 2.  
\end{proof}
\end{lemma}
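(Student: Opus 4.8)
The plan is to localize the statement to one face and then run a Poincar\'e--Hopf count on a $2$-sphere obtained by doubling that face across its boundary. Fix a face $\Sigma$ of the $2$-cell embedding $\widetilde\chi^{\mathcal R}\hookrightarrow\widetilde S$ induced by the rotational system; its closure is a polygon whose sides and corners are the edges and vertices of its boundary walk $w$.

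First I record why $w$ has even length. An edge of $\widetilde\chi$ is a separatrix, hence has as one endpoint the saddle it emanates from; since $(S,L)$ is Morse--Smale there is no saddle connection, and since it is acyclic the other endpoint is an equilibrium, necessarily a source or a sink. Thus $\widetilde\chi$ is bipartite with parts $Sad$ and $Max\cup Min$, so $w$ alternates between saddle and extremal vertices; write its length $2k$, with $k$ saddle occurrences and $k$ extremal occurrences. Acyclicity also ensures that the extremal vertices are genuine sources and sinks rather than periodic orbits, and one checks that the sector which $\Sigma$ occupies at any saddle of $w$ is a hyperbolic sector, while the sector it occupies at a source or a sink is a parabolic one.

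Now the doubling. Let $\Sigma'$ be a mirror copy of the closed cell $\Sigma$ carrying the mirror-image flow, and glue $\Sigma$ to $\Sigma'$ along their common boundary circle. Because $w$ is a concatenation of separatrix arcs, $\widetilde L$ is tangent to the gluing curve, so the two fields fit together into a vector field $V$ --- continuous, and smooth after a standard collar modification --- on the resulting closed oriented surface, which is a $2$-sphere. The zeros of $V$ are exactly the $2k$ vertices of $w$, viewed as distinct points of the equator. At a vertex coming from a source or a sink, $\Sigma$ contributes a parabolic sector and so does $\Sigma'$, so the doubled zero is again a source or a sink and has index $+1$. At a vertex coming from a saddle, $\Sigma$ contributes a single hyperbolic sector and $\Sigma'$ a second one, so the doubled zero has $e=0$ elliptic and $h=2$ hyperbolic sectors, hence index $1+(e-h)/2=0$ by the Bendixson index formula, and is removable by a small perturbation. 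Poincar\'e--Hopf now yields $2=\chi(\mathbb{S}^2)=k\cdot(+1)+k\cdot 0=k$, so $k=2$ and $w$ has $2k=4$ edges.

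The main obstacle is the local phase-portrait analysis behind the third paragraph: one must check that gluing the two mirror-image flows along a chain of orbit arcs creates no new equilibrium along the equator, and identify precisely which sectors occur at each doubled vertex --- in particular that every sector of a saddle (possibly a $2$-, $4$- or $6$-pronged one, as the list of admissible critical points allows) is hyperbolic, so that the doubled saddle always has index $0$, and that every sector of a source or a sink is parabolic, so the doubled zero keeps index $+1$. This is where the Morse--Smale and acyclicity hypotheses are really used: they guarantee that there are no saddle connections (so $w$ alternates), that no extremal corner of $\Sigma$ is a limit cycle, and that the separatrices behave as claimed. Granted these local facts, the global count above is immediate.
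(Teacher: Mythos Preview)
Your proof is correct and follows essentially the same strategy as the paper's: double the face across its boundary to obtain a sphere, observe that the doubled saddle corners have index $0$ while the doubled extremal corners keep index $+1$, and conclude by Poincar\'e--Hopf that there are exactly two extremal corners. Your version is more explicit about the local analysis (tangency along the gluing curve, the Bendixson sector formula, the possibility of $2$-, $4$-, or $6$-pronged saddles), but the argument is the same.
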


We define the \textit{folded Peixoto orgraph} $\chi^\mathcal{R}$ of a Morse-Smale line field $(S,L)$ as the graph combined with a rotational system obtained by projecting the Peixoto orgraph $\widetilde{\chi}^\mathcal{R}$ with the 2-branched covering $p:\widetilde S\to S$.

\begin{proof}[Proof of Theorem \ref{theorem:quads_theorem}] Consider a Peixoto orgraph ${\tilde \chi}^{\mathcal{R}}$ with $Sad \ne \emptyset$.
We must prove that the folded Peixoto orgraph $\chi^\mathcal{R}$ is a radial graph, that is, a bipartite graph with a rotational system giving rise to faces with boundary walks with four edges.

The involution $\theta$ on the vertices of ${\tilde \chi}^{\mathcal{R}}$
leads a partition of the vertices of $\chi$ on two levels, $p(Max) = p(Min)$ and the projection $p(Sad)$.

A face $\widetilde \Sigma \subset \widetilde S$ obtained from the Peixoto orgraph ${\tilde \chi}^{\mathcal{R}}$ is a disk with four edges in its boundary walk, by the previous lemma. The map $p:\widetilde S\to S$ restricted to the interior of $\widetilde \Sigma$ is a homeomorphism since all the branched points are concentrated in the vertices of ${\tilde \chi}$. Thus the projection $\Sigma = p(\widetilde \Sigma)$ in $\chi^\mathcal{R}$ is a face which  4 edges in its boundary walk. 
\end{proof}

Notice that the presence of the non-stable critical points is irrelevant for the argument.






\bibliography{lipics-v2016-sample-article}

\end{document}